\documentclass[oneside,a4paper]{article}
\usepackage{amssymb,amsmath,t1enc,amsthm,geometry,enumitem}
\usepackage[colorlinks]{hyperref}
\usepackage[utf8]{inputenc} 
\usepackage[english]{babel} 
\usepackage{color}

\geometry{a4paper,left=3.0cm, right=3.0cm, top=3.0cm, bottom=3.0cm,foot=2cm}

\newcommand{\halmazvonal}[2]{\left\{\,#1\mid #2\,\right\}}

\newcommand{\Hol}{\mathcal{H}ol}
\newcommand{\hol}{\mathfrak{hol}\hspace{1pt}}
\newcommand{\ihol}{\mathfrak{hol}^*}

\newcommand*{\T}{\widehat {T}}

\newcommand{\R}{\mathbb R}
\newcommand*{\F}{\mathcal{F}}
\newcommand*{\I}{\mathcal{I}}
\newcommand*{\pols}{\mathcal Pol (\mathbb S^{n-1})}

\newcommand{\norm}[1]{\ensuremath{\left\lvert #1 \right\rvert}}
\newcommand*{\X}[1]{{\mathfrak X}(#1)}
\newcommand{\diff}[1]{\mathcal{D}i\!f \hspace{-3pt}f(#1)}
\newcommand{\diffp}[1]{\mathcal{D}i\!f \hspace{-3pt} f_+(#1)}
\newcommand{\diffo}[1]{\mathcal{D}i\!f \hspace{-3pt} f_o(#1)}

\def\P{\mathcal P}
\def\xo{x_{\! o}}

\theoremstyle{plain}
\newtheorem{theorem}{Theorem}[section]
\newtheorem{proposition}[theorem]{Proposition}
\newtheorem{lemma}[theorem]{Lemma}

\theoremstyle{definition}
\newtheorem{definition}[theorem]{Definition}
 
\newtheorem{remark}[theorem]{Remark} 
\newtheorem{example}{Example}

\hypersetup{
    colorlinks = true,
    linkcolor=blue,
}

\title{The holonomy of spherically symmetric projective Finsler metrics of
  constant curvature}

\author{Mezrag, Asma
  \\
  \small \texttt{asma.mezrag@science.unideb.hu}
\\
  \small University of Debrecen,
  \\
  \small Institute of Mathematics,
  \\
  \small Pf.~400, Debrecen, 4002,
  \\
  \small Hungary
  \and
  Muzsnay, Zoltan
  \\
  \small \texttt{muzsnay@science.unideb.hu}
  \\
  \small University of Debrecen,
  \\
  \small Institute of Mathematics,
  \\
  \small Pf.~400, Debrecen, 4002,
  \\
  \small Hungary }

\begin{document}

\maketitle

\begin{abstract}
  In this paper, we investigate the holonomy group of $n$-dimensional
  projective Finsler metrics of constant curvature. We establish that in
  the spherically symmetric case, the holonomy group is maximal, and for a
  simply connected manifold it is isomorphic to $\diffo{\mathbb S^{n-1}}$,
  the connected component of the identity of the group of smooth
  diffeomorphism on the ${n-1}$-dimensional sphere. In particular, the
  holonomy group of the $n$-dimensional standard Funk metric and the
  Bryant--Shen metrics are maximal and isomorphic to
  $\diffo{\mathbb S^{n-1}}$. These results are the firsts describing
  explicitly the holonomy group of $n$-dimensional Finsler manifolds in the
  non-Berwaldian (that is when the canonical connection is non-linear)
  case.
\end{abstract}

\bigskip \bigskip

\noindent {\small \emph{Keywords:} Finsler geometry, holonomy, curvature,
  diffeomorphism groups.}
  
\medskip
  
\noindent {\small \emph{2010 Mathematics Subject Classification:} 53C29,
  53B40, 22E65.}

\bigskip \bigskip

\section{Introduction}

The holonomy group of a Riemannian or Finslerian manifold is a very natural
algebraic object attached to the geometric structure: it is the group
generated by parallel translations along loops with respect to the
canonical connection. Riemannian holonomy groups have been extensively
studied and their complete classification is now known.

On the Finslerian holonomy, relatively few results are known.  Z.I.~Szabó
proved in \cite{Szabo_1981} that in the case where the Finslerian parallel
translation is linear, then there exist Riemannian metrics having the same
holonomy group, and for Landsberg metrics L.~Kozma showed in
\cite{Kozma_1996} that the holonomy groups are compact Lie groups
consisting of isometries of the indicatrix with respect to an induced
Riemannian metric.  The first paper showing that the holonomy property of
Finsler manifolds can be very different from the Riemannian is
\cite{Muzsnay_Nagy_2012_hjm} by proving that the holonomy group of a
Finsler manifold is not necessarily a compact Lie group.  In
\cite{Muzsnay_Nagy_max_2015} it was proven that there are $2$-dimensional
Finsler surfaces with infinite dimensional holonomy group, isomorphic to
$\diffp{\mathbb S^{1}}$, the orientation preserving diffeomorphism group of
the circle in the orientable case, and as a consequence, isomorphic to
$\diff{\mathbb S^{1}}$, the diffeomorphism group of the circle in the
non-orientable case.  We emphasize that explicit examples of Finsler
holonomy groups for nonlinear connection existed only in the
$2$-dimensional cases.

In this article we are focusing on the holonomy structure of
$n$-dimensional Finsler manifolds.  We use the method developed in
\cite{Hubicska_Muzsnay_2018b} for the investigation of holonomy properties
of Finsler manifolds by constructing a tangent Lie algebra to the holonomy
group, called the holonomy algebra, and its subalgebra: the infinitesimal
holonomy algebra.  After a brief introduction of the basic notation in
Section \ref{sec:2}, we give the most important element of this theory in
Chapter \ref{sec:3}. We give a sufficient condition on the holonomy group
to be maximal in terms of the infinitesimal holonomy algebra in Theorem
\ref{thm:1_ihol}.

In Chapter \ref{sec:4} we investigate the holonomy structure of projective
Finsler manifold $(M, F)$ of constant curvature.  In \cite{Shen_2003b},
Z.~Shen proved that in the $x$-analytic case, those metrics are determined
by their Finsler norm function and projective factor at a single point
$\xo \in M$. We consider the \emph{spherically symmetric} case, where there
exists a point $\xo \in M$ where both the norm and the projective factor are
nonzero multiples of the Euclidean norm. If the curvature vanishes, then
the horizontal distribution associated with the canonical connection in the
tangent bundle is integrable, and hence the holonomy group is trivial. This
is why we are focusing on the non-flat case.  In Theorem
\ref{thm:2_hol_group_max} we show that on a simply connected manifold, a
spherically symmetric projective Finsler metrics of nonzero constant
curvatures have maximal holonomy group.  The holonomy groups in those cases
are isomorphic to $\diffo{\mathbb S^{n-1}}$, the connected component of the
identity of the group of smooth diffeomorphism on the ${n-1}$-dimensional
sphere.  In particular, we obtain that the holonomy group of the
$n$-dimensional standard Funk metric and the $n$-dimensional Bryant--Shen
metrics are isomorphic to $\diffo{\mathbb S^{n-1}}$.

\bigskip

\section{Preliminaries}
\label{sec:2}

Throughout this article, $M$ is a $C^\infty$ smooth manifold, $\X{M}$ is
the vector space of smooth vector fields on $M$ and $\diff{M}$ is the group
of all $C^\infty$-diffeomorphism of $M$.  The first and the second tangent
bundles of $M$ are denoted by $(TM,\pi ,M)$ and $(TTM,\tau ,TM)$,
respectively.  Local coordinates $(x^i)$ on $M$ induce local coordinates
$(x^i, y^i)$ on $TM$.  \medskip

\subsection{Finsler manifolds, canonical connection}
\label{sec:finsl}   

A \emph{Finsler manifold} is a pair $(M,\mathcal F)$, where the norm
function $F\colon TM \to \mathbb{R}_+$ is continuous, smooth on
$\T M \!:= \!TM\!  \setminus\! \{0\}$, its restriction $F_x=F|_{_{T_xM}}$
is a positively homogeneous function of degree one and the symmetric
bilinear form
\begin{displaymath}
  g_{x,y} \colon (u,v)\ \mapsto \ g_{ij}(x, y)u^iv^j=\frac{1}{2}
  \frac{\partial^2 F^2_x(y+su+tv)}{\partial s\,\partial t}
  \Big|_{t=s=0}
\end{displaymath}
is positive definite at every $y\in \hat T_xM$. The hypersurface of $T_xM$
defined by
\begin{equation}
  \label{eq:indicatrix}
  \I_x  = \halmazvonal{y \in T_xM}{F(x, y) \! = \! 1},
\end{equation}
is called the \emph{indicatrix} at $x \in M$. We note that at any point
$x\in M$ the indicatrix is diffeomorphic to the $(n-1)$-dimensional sphere.
\emph{Geodesics} of $(M, F)$ are determined by a system of $2$nd order
ordinary differential equation
\begin{equation}
  \label{eq:geodesic}
  \ddot{x}^i + 2 G^i(x,\dot x)=0, \quad i = 1,\dots,n,
\end{equation}
in a local coordinate system $(x^i,y^i)$ of $TM$, where
$G^i(x,y)$ are given by
\begin{equation}
  \label{eq:G_i}
  G^i(x,y):= \frac{1}{4}g^{il}(x,y)\Big(2\frac{\partial
    g_{jl}}{\partial x^k}(x,y) -\frac{\partial g_{jk}}{\partial
    x^l}(x,y) \Big) y^jy^k.
\end{equation}
A vector field $X(t)=X^i(t)\frac{\partial}{\partial x^i}$ along a curve
$c(t)$ is said to be parallel with respect to the associated
\emph{homogeneous (nonlinear) connection} if it satisfies
\begin{equation}
  \label{eq:D}
  D_{\dot c} X (t):=\Big(\frac{d X^i(t)}{d t}
  + G^i_j(c(t),X(t))\dot c^j(t)\Big)\frac{\partial}{\partial x^i}=0, 
\end{equation}
where $ G^i_j=\frac{\partial G^i}{\partial y^j}$.
\\[1ex]
The \emph{horizontal Berwald covariant derivative} $\nabla_X\xi$ of
$\xi(x,y) = \xi^i(x,y)\frac {\partial}{\partial y^i}$ by the vector field
$X(x) = X^i(x)\frac {\partial}{\partial x^i}$ is expressed locally by
\begin{equation}
  \label{covder}
  \nabla_X\xi = \left(\frac {\partial\xi^i(x,y)}{\partial x^j} 
    - G_j^k(x,y)\frac{\partial \xi^i(x,y)}{\partial y^k} + 
    G^i_{j k}(x,y)\xi^k(x,y)\right)X^j\frac {\partial}{\partial y^i}, 
\end{equation}
where we denote
$G^i_{j k}(x,y) := \frac{\partial G_j^i(x,y)}{\partial y^k}$. In the sequel
we will use the simplified notation
\begin{math}
  \nabla_k \xi = \nabla_{\frac{\partial}{\partial x^k}}\xi
\end{math}
for the horizontal Berwald covariant derivatives with respect to the
coordinate directions. 
\\[1ex]
The \emph{Riemannian curvature tensor} field
\begin{math}
  R_{}\!= \! R^i_{jk}(x,y) dx^j\otimes dx^k \otimes
  \frac{\partial}{\partial x^i}
\end{math}
has the expression
\begin{displaymath}
  R^i_{jk}(x,y) =  \frac{\partial G^i_j(x,y)}{\partial x^k} 
  - \frac{\partial G^i_k(x,y)}{\partial x^j} + 
  G_j^m(x,y)G^i_{k m}(x,y) - G_k^m(x,y)G^i_{j m}(x,y). 
\end{displaymath}

 \bigskip

 \subsection{Projective Finsler manifold with constant curvature}
\label{sec:2_2}

A Finsler function $F$ on an open subset $D \subset \mathbb R^n$ is said to
be \emph{projective} or projectively flat, if all geodesic curves are
straight lines in $D$. A Finsler manifold is said to be \emph{locally
  projective} or locally projectively flat, if at any point there is a
local coordinate system $(x^i)$ in which $F$ is projective.

Let $(x^1,\dots ,x^n)$ be a local coordinate system on $M$ corresponding to
the canonical coordinates of the Euclidean space which is projectively
related to $(M, F)$. Then the geodesic coefficients \eqref{eq:G_i} and
their derivatives have the form
\begin{equation}
  \label{eq:proj_flat_G_i}
  G^i = \P(x,y)y^i, \qquad  G^i_k = \frac{\partial
    \P}{\partial y^k}y^i + \P\delta^i_k,\qquad G^i_{kl} = \frac{\partial^2
    \P}{\partial y^k\partial y^l}y^i + \frac{\partial \P}{\partial
    y^k}\delta^i_l + \frac{\partial \P}{\partial y^l}\delta^i_k,
\end{equation}
where $\P$ is a 1-homogeneous function in $y$, called the projective factor
of $(M,F)$. According to \cite[Lemma 8.2.1, p.~155]{Chern_Shen_2005} the
projective factor can be computed using the formula
\begin{equation}
  \label{eq:P}
  \P(x,y) = \frac{1}{2\F} \frac{\partial \F}{\partial x^i}y^i. 
\end{equation}
The Finsler manifold has \emph{constant flag curvature}
$\lambda\in{\mathbb R}$, if for any $x\in M$ the local expression of the
Riemannian curvature is
\begin{equation}
\label{eq:curv_tensor}
  R^i_{jk}(x,y) = \lambda\big(\delta_k^ig_{jm}(x,y)y^m -
  \delta_j^ig_{km}(x,y)y^m\big).
\end{equation}
In this case the flag curvature of the Finsler manifold
(cf.~\cite{Chern_Shen_2005}, Section 2.1) does not depend on the point, nor
on the 2-flag.

\begin{lemma}[\cite{Muzsnay_Nagy_2015}]
  \label{lemma:cov_R}
  The horizontal covariant derivative $\nabla_WR$ of the tensor field
  $R = R^i_{jk}(x,y)dx^j\wedge dx^k\frac{\partial}{\partial x^i}$ vanishes.
\end{lemma}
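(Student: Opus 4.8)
The plan is to reduce the vanishing of $\nabla_W R$ to the parallelism of the two canonical objects $y^i$ and $y_j := g_{jm}(x,y)y^m$, exploiting that the flag curvature $\lambda$ is a \emph{constant}. Writing the curvature tensor from \eqref{eq:curv_tensor} as $R^i_{jk} = \lambda\big(\delta^i_k y_j - \delta^i_j y_k\big)$ and using that $\lambda$ and the Kronecker symbols are horizontally parallel, one gets $\nabla_l R^i_{jk} = \lambda\big(\delta^i_k\,\nabla_l y_j - \delta^i_j\,\nabla_l y_k\big)$, so the whole statement comes down to showing $\nabla_l y_j = 0$ for all $l,j$; since $W$ is arbitrary this gives $\nabla_W R = 0$.

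First I would record that the tautological section is horizontally parallel, $\nabla_k y^i = 0$. This is immediate from \eqref{covder}: as $y^i$ does not depend on $x$ and $\partial y^i/\partial y^m = \delta^i_m$, the formula gives $\nabla_k y^i = -G^i_k + G^i_{km}y^m$, and the $1$-homogeneity of $G^i_k$ in $y$, i.e.\ Euler's relation $G^i_{km}y^m = G^i_k$, makes the two terms cancel.

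Next I would show that $F^2$ is horizontally constant: since $F^2$ is a scalar, its Berwald covariant derivative is the plain horizontal derivative $\nabla_k F^2 = \partial F^2/\partial x^k - G^m_k\,\partial F^2/\partial y^m$, and I claim it vanishes. Using $\partial F^2/\partial y^m = 2y_m$ and the explicit form \eqref{eq:G_i} of $G^i$, this reduces to the identity $\big(\partial g_{ij}/\partial x^k\big)y^iy^j = 2 G^m_k y_m$; its verification uses only the degree-zero homogeneity of $g_{ij}$ in $y$ (equivalently $C_{ijm}y^i = \tfrac12\big(\partial g_{jm}/\partial y^i\big)y^i = 0$) together with Euler's relation. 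I expect this computational step — the horizontal metricity of $F$ for the Berwald connection — to be the main obstacle, since it is the point where the explicit structure of the geodesic coefficients enters; alternatively it may simply be quoted as a standard property of the canonical connection.

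Finally I would pass from $\nabla_k F^2 = 0$ to $\nabla_k y_j = 0$ through a commutation between the vertical derivative $\partial/\partial y^j$ and the horizontal derivative. A short calculation shows that for any function $f(x,y)$ the covector $\partial f/\partial y^j$ satisfies $\nabla_k\big(\partial f/\partial y^j\big) = \partial/\partial y^j\big(\nabla_k f\big)$, because the connection term $-G^m_{jk}\,\partial f/\partial y^m$ in the covariant derivative of the covector exactly cancels the term arising from differentiating $G^m_k$ in $\nabla_k f$. Taking $f = F^2$, so that $\partial f/\partial y^j = 2y_j$, and using $\nabla_k F^2 = 0$ from the previous step, I obtain $\nabla_k y_j = 0$. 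Combined with the reduction above, this proves $\nabla_W R = 0$.
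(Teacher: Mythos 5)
Your proof is correct, and it shares the paper's skeleton: both arguments use the constant-curvature form \eqref{eq:curv_tensor} and the constancy of $\lambda$ to reduce everything to showing that $y_j := g_{jm}(x,y)y^m$ is horizontally parallel. Where you diverge is in the mechanism for that crucial step. The paper establishes $\nabla_W\bigl(g_{jm}y^m\bigr)=0$ by Leibniz, quoting Shen's Lemma 6.2.2, $\nabla_w g_{(x,y)}(u,v)=-2L(u,v,w)$, together with $L_{(x,y)}(y,v,w)=0$ and $\nabla_W y=0$; the computational content is thus outsourced to the Landsberg-curvature machinery of \cite{Shen_2001}. You instead avoid the Landsberg curvature entirely: you use the horizontal metricity $\nabla_k F^2=0$ and the commutation
\begin{equation*}
  \nabla_k\Bigl(\tfrac{\partial f}{\partial y^j}\Bigr)
  = \tfrac{\partial}{\partial y^j}\bigl(\nabla_k f\bigr),
\end{equation*}
which holds exactly because $\partial G^m_k/\partial y^j = G^m_{jk}$ is the Berwald connection coefficient, so that $y_j=\tfrac12\,\partial F^2/\partial y^j$ gives $\nabla_k y_j=\tfrac12\,\partial_{y^j}\bigl(\nabla_k F^2\bigr)=0$. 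Both of your supporting facts check out: the identity $\partial F^2/\partial x^k = 2G^m_k y_m$ does follow from \eqref{eq:G_i} using $C_{ijm}y^i=0$ and Euler's relation (and it is precisely the fact the paper's opening sentence alludes to, ``the horizontal covariant derivative of the Finsler function vanishes''), and your commutation identity is verified by a two-line computation with \eqref{covder} extended to covariant indices. What each approach buys: the paper's proof is shorter by citation; yours is self-contained and in effect \emph{derives} the only piece of the Landsberg identity that is needed -- namely $\nabla_W\bigl(g(y,\cdot)\bigr)=0$, i.e.\ the statement $L(y,\cdot,\cdot)=0$ in disguise -- by vertically differentiating $\nabla_k F^2=0$, which is a cleaner logical dependency. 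One cosmetic remark: your step establishing $\nabla_k y^i=0$ is correct (Euler's relation applied to the $1$-homogeneity of $G^i_k$) but is never used in your final chain of implications; it plays a role only in the paper's Leibniz-style argument, so you could omit it.
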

\begin{proof}
  The Lemma is a consequence of the fact, that the horizontal covariant
  derivative of the Finsler function vanishes.  Indeed, \cite[Lemma 6.2.2,
  p.~85]{Shen_2001} yields
  \begin{displaymath}
    \nabla_wg_{(x,y)}(u,v) = -2 L(u,v,w),
  \end{displaymath}
  for any $u,v,w\in T_xM$, where $L$ is the Landsberg curvature of the Finsler
  metric $\F$ \cite[Chapter 6.2]{Shen_2001}. Moreover $\nabla_W y = 0$,\;
  $\nabla_W \text{Id}_{TM} = 0$ for any vector field
  $W\in{\mathfrak X}^{\infty}(M)$, and $L_{(x,y)}(y,v,w)= 0$ (cf. equation
  6.28, p. 85 in \cite{Shen_2001}). Hence we obtain $\nabla_WR = 0$.

\end{proof}

\bigskip

\section{Holonomy}
\label{sec:3} 

\subsection{Parallel translation and the holonomy group }
\label{sec:finsler2}   

Let $(M, \F)$ be a Finsler manifold. The \emph{parallel translation}
$\tau_{c} \colon T_{c(0)}M \rightarrow T_{c(1)}M$ along a curve $c:[0,1]\to \R$ is
defined by vector fields $X(t)$ along $c(t)$ which are solutions of the
differential equation \eqref{eq:D}. Since $\tau_{c}:T_{c(0)}M\to T_{c(1)}M$ is
a differentiable map between $\hat T_{c(0)}M$ and $\hat T_{c(1)}M$ preserving
the value of the Finsler norm, it induces a map
\begin{equation}
  \label{eq:parallel_3}
  \tau_{c}\colon \I_{c(0)} \longrightarrow \I_{c(1)},
\end{equation}
between the indicatrices. 

The \emph{holonomy group} $\Hol_x(M,\F)$ of a Finsler manifold $(M, \F)$ at a
point $x\in M$ is the group generated by parallel translations along
piece-wise differentiable closed curves starting and ending at $x$. Since the
parallel translation \eqref{eq:parallel_3} is 1-homogeneous and preserves the
norm, one can consider it as a map on the indicatrices
\begin{equation}
  \label{eq:hol_elem}
  \mathcal T_{c}:\I_x \to \I_x,
\end{equation}
therefore, the holonomy group can be seen as a subgroup of the
diffeomorphism group of the indicatrix:
\begin{equation}
  \label{eq:hol_group}
  \Hol_x (\F) \ \subset \ \diff{\I_x}. 
\end{equation}

\bigskip

\subsection{Holonomy algebra and infinitesimal holonomy algebra}

The tangent Lie algebra (see \cite{Hubicska_Muzsnay_2018b}) of the holonomy
group $\Hol_x (\F)$ is called the \emph{holonomy algebra} and is denoted
as $\hol_x (\F)$. The holonomy algebra can give information about the
holonomy property of the Finsler manifold.

When $\Hol_x (\F)$ is a finite-dimensional Lie group, $\hol_x (\F)$ is its
Lie algebra. In particular, for a Riemannian metric, $\Hol_x (\F)$ is a Lie
subgroup of the orthogonal group \cite{Borel_Lichnerowicz_1952}, and
$\hol_x (\F)$ is its Lie algebra. In the Finslerian case, however, it may
happen that $\Hol_x (\F)$ is not a finite dimensional Lie group
\cite{Muzsnay_Nagy_2014, Muzsnay_Nagy_max_2015, Muzsnay_Nagy_2015}.

Considering the tangent spaces of both sides in \eqref{eq:hol_group} we
obtain
\begin{equation}
  \label{eq:hol_x}
  \hol_x (\F) \ \subset \  \X{\I_x}.
\end{equation}
The most important properties of the holonomy algebra are given by the
following

\medskip

\begin{proposition}[\cite{Hubicska_Muzsnay_2018b}]
  \label{prop:hol} \ Let $(M, \F)$ be a Finsler manifold. Then the holonomy
  algebra $\hol_x (\F)$ is a Lie subalgebra of $\X{\I_x}$, and its
  exponential image is in the topological closure of the holonomy group,
  that is
  \begin{equation}
    \label{eq:exp_hol}
    \exp \bigl(\hol_x (\F)\bigr) \subset \overline{\Hol_x(\F)},
  \end{equation}
  where the overline denotes the topological closure of the holonomy group
  with respect to the $C^\infty$--topology of $\diff{\I_x}$.
\end{proposition}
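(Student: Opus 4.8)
The plan is to treat $\Hol_x(\F)$ as an abstract subgroup $G$ of the diffeomorphism group $H:=\diff{\I_x}$ and to identify $\hol_x(\F)$ with its tangent object at the identity. Since the indicatrix $\I_x$ is diffeomorphic to the compact sphere $\mathbb S^{n-1}$, every element of $\X{\I_x}$ is a complete vector field, $H$ is a regular Fréchet Lie group with Lie algebra $\X{\I_x}$, and the exponential map $\exp\colon\X{\I_x}\to H$ is the time-one flow. Concretely, I would define $\hol_x(\F)$ as the set of those $\xi\in\X{\I_x}$ arising as velocities $\xi=\frac{d}{dt}\big|_{t=0}\phi_t$ of smooth curves $t\mapsto\phi_t\in H$ with $\phi_0=\mathrm{id}$ and $\phi_t\in G$ for all $t$. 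Such curves genuinely exist inside $G$: a smooth family of loops $c_s$ based at $x$ with $c_0$ constant induces, by the smooth dependence of the parallel translation \eqref{eq:hol_elem} on the loop, a smooth curve $s\mapsto\mathcal T_{c_s}$ in $G$ through the identity.

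The first claim, that $\hol_x(\F)$ is a Lie subalgebra of $\X{\I_x}$, I would obtain from the group structure of $G$ via the classical one-parameter constructions. Closure under real scaling follows by reparametrizing the generating curve. For $\xi=\dot\phi(0)$ and $\eta=\dot\psi(0)$, the pointwise product $t\mapsto\phi_t\psi_t$ again lies in $G$ and has velocity $\xi+\eta$, giving closure under addition. For the bracket, the commutator curve $t\mapsto\phi_t\psi_t\phi_t^{-1}\psi_t^{-1}$ has vanishing first velocity, but by the classical lemma that it is smooth as a function of $t^2$ (its odd-order terms cancel), its reparametrization by $t=\sqrt u$ is a smooth curve in $G$ whose velocity at $u=0$ equals $[\xi,\eta]$. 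In each case the curve stays in $G$ precisely because $G$ is closed under composition and inversion, and the velocity computations are the standard ones, carried out in the convenient calculus of $H$ using smoothness of composition and inversion in the regular Fréchet Lie group. In particular the set of velocity vectors is \emph{itself} a Lie algebra, so no passage to a generated algebra is needed.

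For the exponential image I would invoke the Lie product formula, valid in regular Fréchet Lie groups: if $\xi=\dot\phi(0)$ with $\phi_0=\mathrm{id}$, then
\begin{equation*}
  \exp(t\xi)=\lim_{n\to\infty}\bigl(\phi_{t/n}\bigr)^{n},
\end{equation*}
with convergence in the $C^\infty$-topology of $H$. For every $n$ the element $\bigl(\phi_{t/n}\bigr)^{n}$ belongs to $G$, since $G$ is a group and $\phi_{t/n}\in G$; hence the limit $\exp(t\xi)$ lies in the $C^\infty$-closure $\overline{\Hol_x(\F)}$. As $t\in\R$ is arbitrary and every element of $\hol_x(\F)$ is such a single velocity vector, this yields $\exp\bigl(\hol_x(\F)\bigr)\subset\overline{\Hol_x(\F)}$.

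The main obstacle is analytic rather than algebraic: all of the above manipulations are routine for finite-dimensional or Banach Lie groups, but $\diff{\I_x}$ is only a Fréchet Lie group, so one must justify both the smoothness-in-$t^2$ of the commutator curve and, above all, the $C^\infty$-convergence of the product formula simultaneously in all derivatives. This is where I would appeal to the regularity of the diffeomorphism group of the compact manifold $\I_x$ (existence and smoothness of the evolution and exponential maps) together with uniform higher-order estimates, and where the careful verification that the generating families of loops produce genuinely smooth curves in $G$ becomes essential.
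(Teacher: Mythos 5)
Two preliminary observations. First, the paper contains no proof of Proposition \ref{prop:hol}: the statement is imported wholesale from \cite{Hubicska_Muzsnay_2018b}, where the holonomy algebra is defined as the tangent Lie algebra of $\Hol_x(\F)$ inside $\diff{\I_x}$ and both assertions are established; so your attempt must be measured against that reference. Second, your architecture is in fact the same as the one used there: tangent vectors realized as velocities of curves lying in the group, closure under sum and bracket via the product curve $\phi_t\psi_t$ and the commutator curve, and the exponential inclusion via a product formula combined with the regular Fr\'echet--Lie structure of $\diff{\I_x}$.

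The genuine gap is the step you yourself call the main obstacle and then take for granted: the ``Lie product formula, valid in regular Fr\'echet Lie groups.'' This is not a citable theorem at that level of generality. Regularity in the sense of Omori/Milnor (existence and smoothness of the evolution and exponential maps) is \emph{not} known to imply the strong Trotter property $\lim_{n\to\infty}(\phi_{t/n})^n=\exp\bigl(t\,\dot\phi(0)\bigr)$; in $\diff{\I_x}$ the exponential map is not locally surjective and the Baker--Campbell--Hausdorff series does not converge, so every Banach--Lie proof of that formula breaks down. The formula does hold for the diffeomorphism group of a compact manifold, but only through substantial extra input --- Omori's strong ILB-Lie group structure with estimates in the Sobolev completions and passage to the inverse limit, or recent ``strong Trotter property'' results for locally $\mu$-convex regular Lie groups --- and supplying exactly this input is the analytic core of \cite{Hubicska_Muzsnay_2018b}. (Note that the only statement from Omori actually invoked in the present paper, continuity of $\exp$, is far weaker.) As written, your argument therefore assumes the crux of the proposition rather than proving it. A smaller flaw: your ``classical lemma'' that $\phi_t\psi_t\phi_t^{-1}\psi_t^{-1}$ is smooth as a function of $t^2$ is false in general, since odd-order terms of degree $\geq 3$ do not cancel; what is true, and sufficient, is that $u\mapsto \phi_{\sqrt u}\,\psi_{\sqrt u}\,\phi_{\sqrt u}^{-1}\psi_{\sqrt u}^{-1}$ has a one-sided derivative equal to $[\xi,\eta]$ at $u=0$, after which one must either admit one-sided curves in the definition of tangency or glue in the reversed commutator $\psi_{\sqrt{-u}}\,\phi_{\sqrt{-u}}\,\psi_{\sqrt{-u}}^{-1}\phi_{\sqrt{-u}}^{-1}$ for $u\leq 0$ to obtain a genuine two-sided $C^1$ curve.
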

From Proposition \ref{prop:hol} one can obtain, that a Lie subalgebra of
$\X{\I_x}$ generated by any subset of $\hol_x (\F)$ is also a Lie
subalgebra of $\hol_x (\F)$, and in particular, its elements have the
tangent property to the holonomy group $\Hol_x(\F)$.

One can show that for any point $x\in M$ and any tangent vectors
$X_1,X_2 \in T_xM$, the curvature vector field $R(X_1, X_2) \in \X{\I_x}$
considered as
\begin{equation}
  \label{eq:curv_fields}
  y \to R_{(x,y)}(X_1, X_2),
\end{equation}
and its successive convariant derivatives are tangent to the holonomy group
\cite{Hubicska_Muzsnay_2018b}. It follows that the Lie subalgebra of vector
fields on the indicatrix $\I_x$ generated by the curvature vector fields
\eqref{eq:curv_fields} and their successive covariant derivatives
\begin{equation}
  \label{eq:inf_hol_alg}
  \ihol_x(\F):=
  \left\langle
    \nabla_{X_k} \dots \nabla_{X_3} R(X_1, X_2) \ \big| \ X_1, \dots,
    X_k\in \X{M}  \right\rangle_{Lie},
\end{equation}
is a Lie subalgebra of $\hol_x (\F)$. From \eqref{eq:exp_hol} we get
\begin{equation}
  \label{eq:exp_ihol}
  \exp \bigl(\ihol_x (\F)\bigr) \subset \overline{\Hol_x(\F)}.
\end{equation}
\begin{definition}
  The Lie algebra $\ihol_x(\F)$ defined in \eqref{eq:inf_hol_alg} is called
  the \emph{infinitesimal holonomy algebra} of the Finsler space $(M,\F)$
  at $x \in M$.
\end{definition}
\noindent
One has the inclusion of Lie algebras:
\begin{equation}
  \label{eq:curv_hol_x}
  \mathfrak{hol}_x^{*}(\F) \subset  \hol_x (\F) \subset \X{\I_x}, 
\end{equation}
therefore, at the level of groups, we get
\begin{equation}
  \label{eq:group_curv_hol_x}
  \exp \bigl(\ihol_x(\F) 
  \bigr) \subset   \exp \bigl(\hol_x (\F)\bigr)
  \subset \overline{\Hol_x(\F)}  \subset \diff{\I_x}.
\end{equation}

\smallskip

We give a sufficient condition on the holonomy group to be maximal in terms
of the infinitesimal holonomy algebra in the following

\begin{theorem}
  \label{thm:1_ihol}
  Let $(M, \F)$ be an $n$-dimensional simply connected Finsler manifold.
  If the infinitesimal holonomy algebra $\ihol_x(\F)$ at some point
  $x\in M$ is dense in the Lie algebra $\X{\I_x}$ of the vector fields on
  the indicatrix $\I_x$, then the holonomy group is maxmial:
  \begin{equation}
    \label{eq:closure_Hol}
    \overline{\Hol_x(\F)} \cong \diffo{\mathbb S^{n-1}},
  \end{equation}
  that is its closure is isomorphic to the connected component of the
  identity in the diffeomorphism group of the $n-1$-dimensional sphere.
\end{theorem}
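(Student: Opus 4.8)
The plan is to prove \eqref{eq:closure_Hol} by establishing the two inclusions $\diffo{\mathbb S^{n-1}} \subseteq \overline{\Hol_x(\F)}$ and $\overline{\Hol_x(\F)} \subseteq \diffo{\mathbb S^{n-1}}$, working throughout under the identification $\I_x \cong \mathbb S^{n-1}$, which turns $\diff{\I_x}$ into $\diff{\mathbb S^{n-1}}$ and its tangent Lie algebra $\X{\I_x}$ into $\X{\mathbb S^{n-1}}$.

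First I would upgrade the density hypothesis from the level of the Lie algebra to the level of the group. By the inclusion \eqref{eq:exp_ihol} we have $\exp(\ihol_x(\F)) \subseteq \overline{\Hol_x(\F)}$, and the right-hand side is closed in the $C^\infty$-topology. Given any $X \in \X{\mathbb S^{n-1}}$, density of $\ihol_x(\F)$ provides a sequence $X_n \in \ihol_x(\F)$ with $X_n \to X$; continuity of the exponential map then gives $\exp(X_n) \to \exp(X)$, so that $\exp(X) \in \overline{\Hol_x(\F)}$. Hence $\exp(\X{\mathbb S^{n-1}}) \subseteq \overline{\Hol_x(\F)}$, and since the closure is a group, the subgroup $\langle \exp(\X{\mathbb S^{n-1}}) \rangle$ generated by the time-one flows of all vector fields on $\mathbb S^{n-1}$ is contained in it.

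To obtain $\diffo{\mathbb S^{n-1}} \subseteq \overline{\Hol_x(\F)}$ I would use that every element of the identity component is the endpoint of an isotopy, hence the time-one map of a time-dependent vector field, and that such a map is the $C^\infty$-limit of finite products of time-one flows of autonomous fields (a product-integral approximation of the nonautonomous flow). Thus $\langle \exp(\X{\mathbb S^{n-1}}) \rangle$ is dense in $\diffo{\mathbb S^{n-1}}$, and since $\overline{\Hol_x(\F)}$ is closed and contains this subgroup, it contains $\diffo{\mathbb S^{n-1}}$. For the opposite inclusion I would use simple connectivity of $M$: a parallel translation $\mathcal T_c$ along a loop $c$ based at $x$ is obtained by solving the differential equation \eqref{eq:D} and therefore depends smoothly on $c$, so contracting $c$ to the constant loop (possible since $\pi_1(M)=0$) yields a continuous path in $\diff{\mathbb S^{n-1}}$ joining $\mathcal T_c$ to the identity. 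Every generator of $\Hol_x(\F)$, and hence all of $\Hol_x(\F)$, therefore lies in $\diffo{\mathbb S^{n-1}}$, and as the identity component is closed this gives $\overline{\Hol_x(\F)} \subseteq \diffo{\mathbb S^{n-1}}$; combined with the previous step this proves \eqref{eq:closure_Hol}.

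The step I expect to be the main obstacle is the generation claim of the third paragraph. The exponential map of a diffeomorphism group is not locally surjective, so nearby diffeomorphisms need not be single exponentials, and one cannot argue naively that a neighborhood of the identity is exponential. One must instead justify the product-integral approximation genuinely in the Fréchet $C^\infty$-topology, relying on the regular Lie group structure of $\diff{\mathbb S^{n-1}}$ and on the connectedness results guaranteeing that the isotopy-reachable diffeomorphisms fill out exactly $\diffo{\mathbb S^{n-1}}$. Keeping every approximation in the full $C^\infty$-topology, rather than in a weaker one, is where the care will be needed.
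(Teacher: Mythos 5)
Your proof is correct, but the decisive step is handled by a genuinely different mechanism than in the paper. Both arguments share the same skeleton: simple connectivity forces $\Hol_x(\F) \subset \diffo{\I_x}$ (and hence its closure, since a connected component is closed), while density of $\ihol_x(\F)$ together with continuity of $\exp$ puts $\exp(\X{\I_x})$, and therefore the generated group $\big\langle \exp(\X{\I_x})\big\rangle_{\mathrm{group}}$, inside $\overline{\Hol_x(\F)}$. The divergence is in passing from this generated group to all of $\diffo{\I_x}$. The paper's route is algebraic: since conjugation satisfies $h(\exp s\xi)h^{-1} = \exp s\, Ad_h\xi$ and $Ad_h$ preserves $\X{\I_x}$, the group $\big\langle \exp(\X{\I_x})\big\rangle_{\mathrm{group}}$ is a non-trivial normal subgroup of $\diffo{\I_x}$, and Thurston's simplicity theorem \cite{Thurston_1974} forces it to equal $\diffo{\I_x}$ exactly, with no approximation argument at all. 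Your route is analytic: every element of $\diffo{\I_x}$ is the time-one map of a smooth isotopy, i.e.\ an evolution (product integral) of a time-dependent vector field, and the regularity of $\diff{\mathbb S^{n-1}}$ in Omori's sense \cite{Omori_1997} gives that such evolutions are $C^\infty$-limits of finite products of exponentials of autonomous fields; density of the generated group in $\diffo{\I_x}$ then suffices, because you only need the inclusion after comparison with the closed set $\overline{\Hol_x(\F)}$. You correctly flag this approximation as the delicate point: since $\exp$ is not locally surjective on diffeomorphism groups, one genuinely needs the product-integral machinery, which is however available in the very reference the paper already cites for continuity of $\exp$. In sum, the paper buys a shorter proof and the stronger exact equality $\big\langle \exp(\X{\I_x})\big\rangle_{\mathrm{group}} = \diffo{\I_x}$ at the price of invoking a deep simplicity theorem, while your argument dispenses with simplicity of the diffeomorphism group entirely at the price of a harder convergence statement, and yields only density of the generated group --- which is all the theorem requires.
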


\begin{proof}
  Let $M$ be a simply connected $n$-dimensional manifold. At any point
  $x\in M$ we have \eqref{eq:hol_group}. Since $M$ is simply connected, any
  closed curve can be shrunk to a point, therefore
  \begin{equation}
    \label{eq:hol_group_simply}
    \Hol_x (\F) \ \subset \ \diffo{\I_x}.
  \end{equation}
  On the other hand, from the hypotheses, the infinitesimal holonomy
  algebra $\ihol_x(\F)$ is dense in $\X{\I_x}$, therfore its closure
  satisfies
  \begin{equation}
    \label{eq:A_closure}
    \overline{\ihol_x(\F)} = \X{\I_x}.
  \end{equation}
  Since the exponential mapping is continuous (c.f.~Lemma 4.1 in
  \cite{Omori_1997}, p.~79), we have 
  \begin{equation}
    \label{eq:cont_exp}
    \exp\big(\overline{\ihol_x(\F) }\big) \subset
    \overline{\exp(\ihol_x(\F))},
  \end{equation}
  hence taking into account \eqref{eq:A_closure}, \eqref{eq:cont_exp},
  \eqref{eq:exp_ihol}, and \eqref{eq:hol_group_simply} respectively, we have
  \begin{equation}
    \label{eq:F_in_Dif_1}
    \exp(\X{\I_x})=\exp\big(\overline{\ihol_x(\F) }\big) \subset
    \overline{\exp(\ihol_x(\F))} \subset \overline{\Hol_x(\F)}
    \subset \diffo{\I_x},
  \end{equation}
  which gives for the generated groups the following relations
  \begin{equation}
    \label{eq:F_in_Dif_2}
    \big\langle\! \exp(\X{\I_x}) \!\big\rangle_{\mathrm{group}} 
    \subset \overline{\Hol_x(\F)}
    \subset \diffo{\I_x}.
  \end{equation}
  Moreover, the conjugation map
  \begin{math}
    Ad:\diffo{\I_x} \times \X{\I_x} \longrightarrow \X{\I_x}
  \end{math}
  satisfies the relation
  \begin{displaymath}
    h (\exp s\xi)  \,h^{-1} =\exp s\, Ad_h \xi,
  \end{displaymath}
  for every $h\in\diffo{\I_x}$ and $\xi\in\X{\I_x}$.
  Since the Lie algebra $\X{\mathbb \I_x}$ is invariant under conjugation,
  therefore the group
  \begin{math}
    \big\langle\!  \exp(\X{\I_x}) \!\big\rangle_{\mathrm{group}}
  \end{math}
  is also invariant under conjugation and consequently, it is a non-trivial
  normal subgroup of $\diffo{\I_x}$.  From \cite[Theorem 1.]{Thurston_1974}
  we know that $\diffo{\I_x}$ is a simple group, its only non-trivial
  normal subgroup is itself, we get that
  \begin{displaymath}
    \big\langle\!  \exp{\X{\I_x}} \!\big\rangle_{\mathrm{group}} =\diffo{\I_x},
  \end{displaymath}
  and \eqref{eq:F_in_Dif_2} reads as
  \begin{equation}
    \label{eq:F_in_Dif_2_2}
    \diffo{\I_x}
    \subset \overline{\Hol_x(\F)}
    \subset \diffo{\I_x}, 
  \end{equation}
  that is 
  \begin{equation}
    \label{eq:Hol_I}
    \overline{\Hol_x(\F)} = \diffo{\I_x}.
  \end{equation}
  Since $\I_x$ and $\mathbb S^{n-1}$ are diffeomorphic, their
  diffeomorphism groups and the connected component of their diffeomorphism
  groups are isomorphic, therefore from \eqref{eq:Hol_I} we obtain 
  \begin{equation}
    \label{eq:Hol_S}
    \overline{\Hol_x(\F)}\cong \diffo{\mathbb S^{n-1}} .
  \end{equation}
\end{proof}

\bigskip

\section{The holonomy of spherically symmetric projective Finsler metrics
  with constant flag curvature}
\label{sec:4}

Z.~Shen in his paper \cite{Shen_2003b} investigated projective Finsler
metrics with constant flag curvature, and give a complete classification in
the $x$-analytical case. In particular, he showed that a projective Finsler
metrics $\F$ with constant flag curvature is completely determined -- using
an $\xo$ centered coordinate system -- by $\F(0,y)$ and $\P(0,y)$, where
$\P$ denotes the projective factor.  In this chapter we investigate the
case when those data are spherically symmetrical, that is at some
particular point $\xo \in M$, the Finsler function and the projective factor
are both a multiple of the Euclidean norm
\begin{equation}
  \label{eq:multiple_norm_2}
  \F(\xo, y)=c_1 \norm{y}, \quad 
  \P(\xo, y)=c_2 \norm{y},
\end{equation}
with $c_1, c_2 \neq 0$.
\begin{remark}
  \label{rem:sphericallysymmetric}
  Without loss of generality, the constant $c_1$ may be considered equal to
  1, and instead of \eqref{eq:multiple_norm_2} to have at $\xo \in M$ the
  relations:
  \begin{equation}
    \label{eq:multiple_norm}
  \F(\xo, y)=\norm{y}, \qquad   \P(\xo, y)=c \cdot \norm{y}, 
\end{equation}
where $c\neq 0$. Indeed, there is no loss of generality, since replacing
the Finsler norm function $\F$ by a positive constant multiple of it does
not change the geodesic equation \eqref{eq:geodesic}, the parallelism
\eqref{eq:D}, therefore it does not change the holonomy sturcture.
\end{remark}
We note that if \eqref{eq:multiple_norm} is satisfied, then the indicatrix
at $\xo$ is
\begin{equation}
  \label{eq:I_x0}
  \I_{\xo}= \bigl\{ y\in T_{\xo}M \ \big| \ \norm{y} \! = \! 1
  \bigr\} =  \mathbb S^{n-1} \quad \subset \R^n,
\end{equation}
the $n-1$ dimensional Euclidean sphere.  Moreover, for the geodesic
coefficients \eqref{eq:proj_flat_G_i} at $\xo$ we have
\begin{equation}
  \label{eq:proj_flat_G_ijk}
  \begin{aligned}
    G^i(\xo,y) & = c \norm{y} y^i,
    \\
    G^i_j (\xo,y) &
    %
    %
    = c \left( \frac{y^iy^j}{\norm{y}} + \norm{y}\delta^i_j \right),
    \\
    G^i_{jk}(\xo,y) &
    = c 
    \left(
       \frac{y^i}{\norm{y}} \delta^j_k + \frac{y^j}{\norm{y}} \delta^i_k +
      \frac{y^k}{\norm{y}} \delta^i_j - \frac{y^iy^jy^k}{\norm{y}^3}
    \right).
  \end{aligned}
\end{equation}
Using \cite[Lemma 8.2.1]{Chern_Shen_2005} we get for a projective metric:
\begin{equation}
  \label{eq:P_x}
  \frac{\partial \P}{\partial x^m} = \P \frac{\partial \P}{\partial y^m}
  -\lambda \F \frac{\partial \F}{\partial y^m}
  = \tfrac{1}{2}  \frac{\partial (\P^2 -  \lambda \F^2)}{\partial y^m},  
\end{equation}
therefore at $\xo$ where we have \eqref{eq:multiple_norm} we get
\begin{equation}
  \label{eq:P_x_0}
  \begin{aligned}
    \frac{\partial \P}{\partial x^m}(\xo,y) = (c^2 -\lambda) y^m, \qquad
  \end{aligned}
\end{equation}
and  at $\xo$ we have 
\begin{equation}
  \label{eq:proj_flat_G_i_x_m}
  \begin{aligned}
    \frac{\partial G^i_k}{\partial x^m}(\xo,y) = (c^2 -\lambda) ( y^i
    \delta^m_k + y^m \delta^i_k ).
    %
  \end{aligned}
\end{equation}
From \eqref{eq:curv_tensor} we get that at $\xo$ the coefficients of the
curvature tensor are
\begin{equation}
  \label{eq:curv}
  R^l_{ij}(\xo,y) = \lambda
  \bigl(
  \delta_j^l\delta^i_my^m - \delta_i^l\delta^j_m y^m 
  \bigr)
  = \lambda\bigl(\delta_j^l y^i -\delta_i^l y^j
  \bigr),
\end{equation}
therefore the curvature vector fields are
\begin{equation}
  \label{eq:curv_field}
  \xi_{ij}=R_{ij}^s \tfrac{\partial}{\partial y^s}
  = \lambda\left( y^i \frac{\partial}{\partial y^j} - y^j
    \frac{\partial}{\partial y^i}
  \right),
\end{equation}
the infinitesimal generators of rotations.

\bigskip

\noindent
Let us introduce the multiindex notation: for
$\mathbf{m}:=(m_1, \dots m_n)$ its length is
$\ell(\mathbf{m})=m_1 + \dots +m_n$, and
\begin{math}
  \mathbf{y}^\mathbf{m}=\prod_{i=1}^n (y^i)^{m_i}= (y^1)^{m_1}\dots
  (y^n)^{m_n}.
\end{math}
We define
\begin{equation}
  \label{eq:4_p}
  {\mathcal A}_p= Span_{_\R}\left\{
    \frac{\mathbf{y}^{\mathbf{m}}}{\norm{\mathbf{y}}^{\ell({\mathbf{m}})}} \,
    \xi_{ij}\Big |_{\widehat{T}_{\xo} M} \right\}_{1 \leq i,j \leq n, \,
    \ell({\mathbf{m}}) = p},
\end{equation}
and introduce the Lie algebra
\begin{equation}
  \label{eq:4}
  {\mathcal A}:= \oplus_{p=0}^\infty {\mathcal A}_p.
\end{equation}

\begin{remark}
  \label{rem:A}
  The elements of $\mathcal A$ can be seen as 1-homogeneous vector fields
  on $\widehat{T}_{\xo} M$, or equivalently, as vector fields on the
  indicatrix $\I_{\xo} \simeq \mathbb S^{n-1}$ with polynomial
  coefficients. Indeed, in \eqref{eq:4_p} the denominators of the
  coefficients of the curvature vector fields $\xi_{ij}$ is $\norm{y}^p$
  which is identically 1 on the indicatrix at the particular point
  $\xo \in M$.
\end{remark}

\smallskip

\begin{lemma}
  \label{lemma:A}
  The Lie algebra $\mathcal A$ satisfies
  $\mathcal A \subset \ihol_{\xo}(\F)$.
\end{lemma}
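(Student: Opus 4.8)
The plan is to prove, by induction on the degree $p=\ell(\mathbf{m})$, that every generator $\frac{\mathbf{y}^{\mathbf{m}}}{\norm{\mathbf{y}}^{p}}\xi_{ij}$ of $\mathcal A_p$ lies in $\ihol_{\xo}(\F)$; since $\mathcal A$ is the span of these generators (see \eqref{eq:4_p}, \eqref{eq:4}), this yields $\mathcal A\subset\ihol_{\xo}(\F)$. For $p=0$ the claim is immediate: then $\mathbf{y}^{\mathbf{m}}=1$ and $\xi_{ij}$ is exactly the curvature vector field \eqref{eq:curv_field}, which belongs to $\ihol_{\xo}(\F)$ by the very definition \eqref{eq:inf_hol_alg}. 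We may assume $\lambda\neq0$, since otherwise $\xi_{ij}=0$, so $\mathcal A=0$ and the statement is trivial.

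The engine of the induction is the horizontal covariant derivative, used as a degree-raising operator. Because the curvature tensor is parallel (Lemma \ref{lemma:cov_R}), the covariant derivative of the curvature vector field reduces to a purely algebraic expression: writing $\nabla_a R=0$ componentwise and comparing with \eqref{covder} gives $\nabla_a\xi_{ij}=G^k_{ai}\xi_{kj}+G^k_{aj}\xi_{ik}$. Substituting the explicit coefficients \eqref{eq:proj_flat_G_ijk} at $\xo$, each $\nabla_a\xi_{ij}$ is an explicit element of $\mathcal A_1$. More generally, the Leibniz rule $\nabla_a(f\,\xi_{ij})=(\delta_a f)\,\xi_{ij}+f\,\nabla_a\xi_{ij}$ with horizontal derivative $\delta_a f=\partial_{x^a}f-G^k_a\partial_{y^k}f$ governs the action of $\nabla_a$ on a general generator. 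Since the coefficient $f=\frac{\mathbf{y}^{\mathbf{m}}}{\norm{\mathbf{y}}^{p}}$ is $0$-homogeneous, Euler's identity kills the radial contribution and one is left at $\xo$ with $\delta_a f=-c\,\norm{\mathbf{y}}\,\partial_{y^a}f$. The net effect of $\nabla_a$ is thus multiplication by $c\frac{y^a}{\norm{\mathbf{y}}}$, i.e.\ a passage from degree $p$ to degree $p+1$, up to index-contraction terms and a degree $p-1$ remainder. The non-vanishing of the leading coefficient here is precisely where the hypothesis $c\neq0$ enters.

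With this mechanism the inductive step runs as follows. By \eqref{eq:inf_hol_alg} every iterated covariant derivative $\nabla_{a_p}\cdots\nabla_{a_1}\xi_{ij}$ lies in $\ihol_{\xo}(\F)$, and $\ihol_{\xo}(\F)$, being a Lie algebra containing $\mathcal A_0\cong\mathfrak{so}(n)$, is stable under the adjoint action of $\mathcal A_0$. Assume that all generators of degree $\le p$ already lie in $\ihol_{\xo}(\F)$. Applying one further derivative $\nabla_a$ to a degree-$p$ iterated covariant derivative produces a degree-$(p+1)$ element together with a degree-$(p-1)$ remainder; the remainder lies in $\ihol_{\xo}(\F)$ by the induction hypothesis and may be subtracted, leaving a genuine degree-$(p+1)$ element of $\ihol_{\xo}(\F)$. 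It then remains to show that the degree-$(p+1)$ elements so obtained, together with their brackets with $\mathcal A_0$, span the whole of $\mathcal A_{p+1}$, which closes the induction.

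This spanning step is the main obstacle. The difficulty is twofold: first, $\nabla_a$ never produces the single term $\frac{\mathbf{y}^{\mathbf{m}}y^a}{\norm{\mathbf{y}}^{p+1}}\xi_{ij}$ in isolation, but a fixed linear combination of it with the contraction terms $\frac{\mathbf{y}^{\mathbf{m}}y^i}{\norm{\mathbf{y}}^{p+1}}\xi_{aj}$ and $\frac{\mathbf{y}^{\mathbf{m}}y^j}{\norm{\mathbf{y}}^{p+1}}\xi_{ia}$; second, the naive degree filtration is blurred by the relation $\norm{\mathbf{y}}^2=\sum_k (y^k)^2$, so that a nominally higher-degree expression may reduce to a lower one. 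I expect to resolve this by regarding each $\mathcal A_p$ as an $\mathfrak{so}(n)$-module under the adjoint action of $\mathcal A_0$, decomposing it into isotypic components, and checking that the explicit leading terms produced by $\nabla_a$ meet every component; the $\mathfrak{so}(n)$-equivariance of $\nabla$ together with $c\neq0$ should force the raising map to be non-zero on each irreducible summand, yielding surjectivity onto $\mathcal A_{p+1}$.
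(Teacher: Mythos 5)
Your proposal is incomplete at precisely the point you yourself identify as ``the main obstacle.'' The base case $p=0$ and the first-derivative computations agree with the paper, but your inductive step is a plan, not a proof: the assertion that the degree-raising map meets every $\mathfrak{so}(n)$-isotypic component of $\mathcal A_{p+1}$ is stated with ``I expect'' and ``should force,'' and no argument is given. Since $\mathcal A_{p+1}$ is exactly the span of the monomial generators $\frac{\mathbf{y}^{\mathbf{m}}}{\norm{\mathbf{y}}^{p+1}}\xi_{ij}$, and the covariant derivative only ever produces fixed linear combinations of these mixed with contraction terms, the surjectivity claim is the entire content of the lemma at degree $p+1$; leaving it as a representation-theoretic expectation is a genuine gap, not a routine verification.

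There is also a technical flaw in your raising mechanism itself. The formulas \eqref{eq:multiple_norm} and \eqref{eq:proj_flat_G_ijk} hold only \emph{at} the single point $\xo$, so an iterated covariant derivative cannot be computed at $\xo$ by a Leibniz rule applied to its value at $\xo$: the coefficient functions of $\nabla_{a_p}\cdots\nabla_{a_1}\xi_{ij}$ depend on $x$, and their $x$-derivatives at $\xo$ are not determined by spherical symmetry at $\xo$ alone. They must be extracted from the projective-flatness and constant-curvature structure equations, as the paper does in \eqref{eq:P_x}--\eqref{eq:proj_flat_G_i_x_m}; this is why the second derivative \eqref{eq:second_cov} contains $\lambda$- and $c^2$-terms that your heuristic $\delta_a f=-c\,\norm{\mathbf{y}}\,\partial_{y^a}f$ does not predict, and the bookkeeping worsens with each further derivative. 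The paper sidesteps both problems: it computes covariant derivatives explicitly only up to order two, and then runs the induction \emph{inside the Lie algebra} $\ihol_{\xo}(\F)$, bracketing a degree-$p$ generator with the explicit degree-one element
\begin{equation*}
  \sum_{s=1}^{n}\tfrac{y^s}{\norm{y}}\xi_{ks}
  = \tfrac{1}{\lambda}\Bigl(\tfrac{y^k}{\norm{y}}\,C-\norm{y}\,\tfrac{\partial}{\partial y^k}\Bigr)\in\mathcal A_1 ,
\end{equation*}
as in \eqref{eq:bracket_1_p} and \eqref{eq:bracket_2_p}. Brackets of vector fields on $\I_{\xo}$ are computable from data at $\xo$ alone, and the resulting recursion produces each monomial generator of $\mathcal A_{p+1}$ individually (first for $k\neq i,j$, then for $k=i$), with explicit nonzero coefficients --- no decomposition into irreducible $\mathfrak{so}(n)$-modules is needed. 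If you want to salvage your route, you would need both a proof of the isotypic surjectivity claim and an inductive control of the $x$-jets of the iterated covariant derivatives; the paper's bracket trick is the shortcut that makes both unnecessary.
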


\begin{proof}
  The infinitesimal holonomy algebra $\ihol_{\xo}(\F)$ is generated by the
  curvature vector fields, their successive horizontal covariant
  derivatives, and contains their Lie brackets. We will show, using
  mathematical induction, that the generating elements of $\mathcal A_p$
  ($p\geq 0$) can be express as linear combination of elements of
  $\ihol_{\xo}(\F)$.

  \smallskip

  $\bullet$ $p=0$. The elements of $\mathcal A_0$ are just the linear
  combination of curvature vector fields $\xi_{ij}$ with constant
  coefficients, and by definitions they are elements of $\ihol_{\xo}(\F)$.
  
  \smallskip

  $\bullet$ $p=1$. From Lemma \ref{lemma:cov_R} we know, that the
  horizontal covariant derivative of the curvature tensor vanishes. It
  follows that for the curvature vector fields
  \begin{math}
    \xi_{ij} = R(\frac{\partial}{\partial x^i}, \frac{\partial}{\partial
      x^j}),
  \end{math}
  we get 
  \begin{equation}
    \label{eq:cov_R}
    \nabla_k \xi_{ij} = G^s_{ki}\xi_{sj} + G^s_{kj}\xi_{is}. 
  \end{equation}
  At the particular point $\xo\in M$, using the formula
  \eqref{eq:proj_flat_G_ijk} and \eqref{eq:curv_field}, one can obtain:
  \begin{equation}
    \label{eq:nabla_k}
    \nabla_k \xi_{ij}
    = \tfrac{c}{\norm{y}} \left( 2 y^k \xi_{ij}
      + \delta^k_i y^s \xi_{sj} 
      - \delta^k_j y^s \xi_{si} \right).
  \end{equation}
  As a consequence, for any pairwise different indeces $i,j,k$ we get
  \begin{equation}
    \label{eq:nabla_k_ij}
    \tfrac{y^k}{\norm{\mathbf{y}}} \xi_{ij}  = \tfrac{1}{2c}
    \nabla_k \xi_{ij} \quad \in
    \ihol_{\xo}(\F) \qquad i\neq j, \ i\neq
    k, \ j\neq k, 
  \end{equation}
  showing that \eqref{eq:nabla_k_ij} are in $\ihol_{\xo}(\F)$.  Moreover, for
  $k=i$ from \eqref{eq:nabla_k}, we get the linear system:
  \begin{equation}
    \label{eq:matrix}
    \hphantom{\qquad j=1,\dots, n,}
    \begin{pmatrix}
      \nabla_{1}\xi_{1j} \\[-3pt]
      \vdots \\[-3pt]
      \left[
        \nabla_{j}\xi_{jj}
      \right] \\[-3pt]
      \vdots \\[-3pt]
      \nabla_{n}\xi_{nj}
    \end{pmatrix}
    = c
    \begin{pmatrix}
      3 & 1 &   \dots  & 1 \\
      \vdots &  \ddots & & \vdots\\
      \vdots & &  \ddots &  \vdots \\
      1 & 1 &   \dots & 3 \\
    \end{pmatrix}
    \begin{pmatrix}
      \frac{y^1}{\norm{y}}\xi_{1j} \\[-5pt]
      \vdots \\[-5pt]
      \left[ \frac{y^j}{\norm{y}}\xi_{jj}
      \right] \\[-5pt]
      \vdots \\[-5pt]
      \frac{y^n}{\norm{y}}\xi_{nj} 
    \end{pmatrix},
    \qquad j=1,\dots, n,
  \end{equation}
  where in the column matrices the (trivially zero) terms written in square
  brackets are missing.  Since the quadratic matrix in \eqref{eq:matrix} is
  invertible, we get that $\frac{y^i}{\norm{y}}\xi_{ij}$ can be expressed
  as a linear combination of covariant derivatives of curvature vector
  fields, therefore they are also elements of $\ihol_{\xo}(M)$.  Therefore
  we obtained that the generating elements of $\mathcal A_1$ are in
  $\ihol_{\xo}(M)$, therefore $\mathcal A_1 \subset \ihol_{\xo}(M)$.
  \bigskip
  
  $\bullet$ $p=2$. The second covariant derivatives of the curvature vector
  fields are elements of $\ihol_{\xo}(M, \F)$. Calculating their expression
  we get
  \begin{equation}
    \label{eq:second_cov}
    \begin{aligned}
      \nabla_m \nabla_k \xi_{ij}
      = & (\lambda + c^2) (\delta^m_j \xi_{ki} - \delta^m_i \xi_{kj}) + c^2
      ( \delta^k_j \xi_{mi} - \delta^k_i \xi_{mj} ) + 2 (c^2-\lambda
      )\delta^m_k \xi_{ij}
      \\
      & + 4 \tfrac{c^2}{\norm{y}^2} \left(y^m y^k \xi_{ij} +
        \delta^i_k y^m y^s \xi_{sj} - \delta^j_k y^m y^s \xi_{si} -
        \delta^j_m y^k y^s \xi_{si} + \delta^i_m y^k y^s \xi_{sj}\right).
  \end{aligned}    
\end{equation}
As special cases we can get (different letters denote different indeces,
repeated indeces does not mean summation in the formula below):
\begin{subequations}
  \label{eq:second_cov_particular}
  \begin{align}
    \label{eq:second_cov_mkij}
    \nabla_m\nabla_k \xi_{ij}
    & = + 4 c^2\tfrac{y^m y^k}{\norm{y}^2}  \xi_{ij}
    \\
    \label{eq:second_cov_kkij}
    \nabla_k\nabla_k \xi_{ij}
    & = 2 (c^2-\lambda )   \xi_{ij} + 4c^2 \tfrac{(y^k)^2}{\norm{y}^2}  \xi_{ij},
    \\
    \label{eq:second_cov_iiij}
    \nabla_i\nabla_i \xi_{ij}
    & =-3 \lambda \xi_{ij}
      + 4c^2 \tfrac{(y^i)^2}{\norm{y}^2}  \xi_{ij},
      + 8c^2  \sum_{s=1}^n  \tfrac{y^i y^s}{\norm{y}^2} \xi_{sj} 
    \\[-7pt]		
    \label{eq:second_cov_ikij}
    \nabla_i \nabla_k \xi_{ij}
    & = 
      - (\lambda + c^2)  \xi_{kj}
      + 4 c^2 \tfrac{y^i y^k}{\norm{y}^2}  \xi_{ij}
      + 4 c^2  \sum_{s=1}^n \tfrac{y^k y^s}{\norm{y}^2}   \xi_{sj} 
  \end{align}    
\end{subequations}
From the equation \eqref{eq:second_cov_mkij} we obtain that
$\frac{y^k y^m}{\norm{y}^2} \xi_{ij}$ can be expressed as a constant
multiple of the second derivatives of curvature vector fields, therefore it
is in $\ihol_{\xo}(M, \F)$. Similarly, from equation
\eqref{eq:second_cov_kkij} we get that
$\frac{(y^k)^2}{\norm{y}^2} \xi_{ij}$ can be expressed with the combination
of the curvature vector fields and teir second covariant derivatives,
therefore they are elements of $\ihol_{\xo}(M, \F)$.  Equation
\eqref{eq:second_cov_ikij} can be considered as a linear system on
$\frac{ y^k y^i}{\norm{y}^2}\xi_{ij}$
\begin{equation}
  \label{eq:matrix1}
  \begin{pmatrix}
    \nabla_{1} \! \nabla_{k}\xi_{1j} \! + \! (c^2 \! + \! \lambda)
    \xi_{kj} \\[-3pt]
    \vdots \\[-3pt]
    [{\nabla_{j}\nabla_{k}\xi_{jj} \! + \! (c^2 \! + \! \lambda) \xi_{kj}}]
    \\[-3pt]
    \vdots \\[-3pt]
    \nabla_{n}\nabla_{k}\xi_{nj} \! + \! (c^2 \! + \! \lambda) \xi_{kj}
  \end{pmatrix}
  = 4 c^2 
  \begin{pmatrix}
    2 & 1 & \hspace{-5pt} \dots \hspace{-5pt}
    & 1 \\[3pt]
    1 & \ddots & \hspace{-10pt} & \vdots
    \\[-4pt]
    \vdots & & \hspace{-5pt} \ddots \hspace{-5pt} &1
    \\[3pt]
    1  &  \hspace{-5pt} \dots & 1 \hspace{-5pt} & 2
  \end{pmatrix}
  \begin{pmatrix}
    \frac{y^k y^1}{\norm{y}^2}\xi_{1j} \\[-3pt]
    \vdots \\[-3pt]
    \left[ {\frac{ y^k y^j}{\norm{y}^2}\xi_{jj}}
    \right] \\[-3pt]
    \vdots \\[-3pt]
    \frac{ y^k y^n}{\norm{y}^2}\xi_{nj}
  \end{pmatrix}
\end{equation}
where the elements on the left hand side are elements of the infinitesimal
holonomy algebra $\ihol_{\xo}(\F)$. The $(n \! - \! 1)\times (n \! - \! 1)$
matrix appearing in \eqref{eq:matrix1} is regular, therefore the terms
$\frac{ y^k y^i}{\norm{y}^2}\xi_{ij}$ can be expressed as linear
combination of elements of the left hand side which are elements of
$\ihol_{\xo}(\F)$. It follows that
$\frac{ y^k y^i}{\norm{y}^2}\xi_{ij} \in \ihol_{\xo}(\F)$.  Finally, using
\eqref{eq:second_cov_iiij}, one can express
$\frac{ y^i y^i}{\norm{y}^2}\xi_{ij}$ with the help of the other terms
appearing in those equations.  Since these are all in $\ihol_{\xo}(\F)$, we
can get that $\frac{ y^i y^i}{\norm{y}^2}\xi_{ij} \in \ihol_{\xo}(\F)$.
One can conclude that the elements generating $\mathcal A_2$ can be
expressed as a linear combination of elements in the infinitesimal holonomy
algebra, therefore $\mathcal A_2 \subset \ihol_{\xo}(\F)$.

\bigskip

\noindent
$\bullet$ Let us assume that $A_\ell \in \ihol_{\xo}(M, \F)$ for
$1\leq \ell \leq p$ and we will show that $A_{p+1} \in \ihol_{\xo}(M, \F)$.
First we observe that
\begin{equation}
  \label{eq:A_1}
  \sum_{s=1}^{n}\tfrac{y^s}{\norm{y}}\xi_{ks}
  = \frac{1}{\lambda} 
  \left(
    \sum_{s=1}^n \tfrac{y^s y^k}{\norm{y}}
    \tfrac{\partial}{\partial y^s}
    - \norm{y} \tfrac{\partial}{\partial y^k}
  \right) 
  = \frac{1}{\lambda} 
  \left(\tfrac{y^k}{\norm{y}} C 
    - \norm{y} \tfrac{\partial}{\partial y^k}  \right) 
  \in \mathcal A_1,
\end{equation}
where $C$ denotes the canonica Liouville (or radial) vector field.  Let us
consider for the multiindex $\mathbf{m}:=(m_1, \dots m_n)$ where
$\ell(\mathbf{m})=m_1 + \dots +m_n=p$ the vector field
\begin{equation}
  \label{eq:A_p}
  \frac{\mathbf{y}^{\mathbf{m}}}{\norm{\mathbf{y}}^{\ell({\mathbf{m}})}} \,
  \xi_{ij}
  =  \frac{y_{1}^{m_1}y_{2}^{m_2} \dots y_{n}^{m_n}}{\norm{y}^p}\xi_{ij}
  \qquad \in  \mathcal A_p.
\end{equation}
By the induction hypothesis $\mathcal A_1\subset \ihol_{\xo}(M)$ and
$\mathcal A_p\subset \ihol_{\xo}(M)$, therefore -- using the fact that
$\ihol_{\xo}(M)$ is a Lie algebra -- we get that
\begin{equation}
  \label{rem:bracket_hol}
  [\mathcal A_1, \mathcal A_p] \subset \ihol_{\xo}(M).
\end{equation}
In particular, the Lie bracket of the elements \eqref{eq:A_1} and
\eqref{eq:A_p} is an element of $\ihol_{\xo}(M)$. If $i,j,k$ are pairwise
different indeces, then one can obtain for the multiindex $\mathbf{m}$ of
length $\ell(\mathbf{m})=p$:
\begin{equation}
  \label{eq:bracket_1_p}
  \begin{aligned}
    \left[
      \frac{\mathbf{y}^{\mathbf{m}}}{\norm{\mathbf{y}}^{\ell({\mathbf{m}})}}
      \, \xi_{ij}, \sum_{s=1}^{n}\frac{y^s}{\norm{y}}\xi_{ks} \right]
    & = \left[ \frac{\mathbf{y}^{\mathbf{m}}}{\norm{\mathbf{y}}^{p}} \,
      \xi_{ij}, \tfrac{1}{\lambda} \Bigl( \tfrac{y^k}{\norm{y}} C -
      \norm{y} \tfrac{\partial}{\partial y_{k}} \Bigr)\right]
    = m_k
    \tfrac{\mathbf{y}^{\mathbf{m}-\mathbf{1_k}}}{\norm{\mathbf{y}}^{p-1}}
    \, \xi_{ij} - p
    \tfrac{\mathbf{y}^{\mathbf{m}+\mathbf{1_k}}}{\norm{\mathbf{y}}^{p+1}}
    \, \xi_{ij},
  \end{aligned}
\end{equation}
where $\mathbf{1_k}=(0, \dots, 1, \dots 0)$ denotes the multiindex having
$1$ at the $k$th position. From \eqref{rem:bracket_hol} the left hand side
of \eqref{eq:bracket_1_p} is in the infinitesimal holonomy, and
from the induction hypothesys
\begin{math}
  \frac{\mathbf{y}^{\mathbf{m}-\mathbf{1_k}}}{\norm{\mathbf{y}}^{p-1}} \,
  \xi_{ij} \in \mathcal A_{p-1},
\end{math}
is also an element of $\ihol_{\xo}(M)$. It follows that
\begin{equation}
  \label{eq:A_p_kij}
  \hphantom{i\neq j, \  k\neq i, \ k\neq j}
  \frac{\mathbf{y}^{\mathbf{m}+\mathbf{1_k}}}{\norm{\mathbf{y}}^{p+1}} \,
  \xi_{ij} \in \ihol_{\xo}(\F), \qquad i\neq j, \  k\neq i, \ k\neq j.   
\end{equation}
Similarly, 
\begin{equation}
  \label{eq:bracket_2_p}
  \begin{aligned}
    \left[
      \frac{\mathbf{y}^{\mathbf{m}}}{\norm{\mathbf{y}}^{\ell({\mathbf{m}})}}
      \, \xi_{ij}, \sum_{s=1}^{n} \frac{y^s}{\norm{y}}\xi_{is} \right]
    = m_i
    \frac{\mathbf{y}^{\mathbf{m}-\mathbf{1}_i}}{\norm{\mathbf{y}}^{p-1}}
    \, \xi_{ij}
    +(1-p)
    \frac{\mathbf{y}^{\mathbf{m}+\mathbf{1}_i}}{\norm{\mathbf{y}}^{p+1}}
    \, \xi_{ij}
    + \sum_{s=1, s \neq i}^n
    \frac{\mathbf{y}^{\mathbf{m}+\mathbf{1}_s}}{\norm{\mathbf{y}}^{p+1}}
    \, \xi_{sj}.
  \end{aligned}
\end{equation}
Using \eqref{eq:A_p_kij} we obtain that
\begin{equation}
  \label{eq:A_p_iij}
  \hphantom{\qquad i \neq j}
  \frac{\mathbf{y}^{\mathbf{m}+\mathbf{1}_i}}{\norm{\mathbf{y}}^{p+1}} \,
  \xi_{ij} \in \ihol_{\xo}(\F), \qquad i \neq j. 
\end{equation}
From \eqref{eq:A_p_kij} and \eqref{eq:A_p_iij} we get that
$\mathcal A_{p+1} \subset \ihol_{\xo}(\F)$ which completes the proof of Lemma
\ref{lemma:A}.

\end{proof}

\medskip

\begin{proposition}
  \label{prop:ihol_dense}
  Let $(M, \F)$ be a projectively flat spherically symmetric Finsler
  manifold of constant curvature $\lambda \neq 0$ and $\xo$ a point where
  \eqref{eq:multiple_norm} is satisfied.  Then the infinitesimal holonomy
  algebra $\ihol_{\xo}(\F)$ is dense in $\X{\I_{\xo}}$.
\end{proposition}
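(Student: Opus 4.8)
The plan is to combine Lemma~\ref{lemma:A} with a density argument for polynomial vector fields on the sphere. By Lemma~\ref{lemma:A} we already have $\mathcal A \subset \ihol_{\xo}(\F)$, so it suffices to show that $\mathcal A$ is dense in $\X{\I_{\xo}}$ in the $C^\infty$-topology. By Remark~\ref{rem:A}, restriction to the indicatrix $\I_{\xo}=\mathbb S^{n-1}$ (where $\norm{y}\equiv 1$) identifies $\mathcal A$ with the space of all vector fields of the form $\sum_{i,j} P_{ij}(y)\,\xi_{ij}$ whose coefficients $P_{ij}$ are polynomials: summing over the homogeneous pieces $\mathcal A_p$ produces exactly the finite linear combinations of monomials times the rotation generators $\xi_{ij}$. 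Thus the statement reduces to approximating an arbitrary smooth vector field on $\mathbb S^{n-1}$ by such polynomial combinations.

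First I would record that the $\xi_{ij}$ span the tangent space at every point, together with an explicit formula for the coefficients. Since $\xi_{ij}=\lambda\bigl(y^i\frac{\partial}{\partial y^j}-y^j\frac{\partial}{\partial y^i}\bigr)$, a direct computation on $\mathbb S^{n-1}$ yields
\begin{equation}
  \label{eq:plan_proj}
  \frac{1}{\lambda}\sum_{i=1}^n y^i\,\xi_{ij}
  = \frac{\partial}{\partial y^j} - y^j\sum_{i=1}^n y^i\frac{\partial}{\partial y^i},
\end{equation}
whose right-hand side is the orthogonal projection of $\frac{\partial}{\partial y^j}$ onto $T_y\mathbb S^{n-1}$ (here $\norm{y}=1$ and $\lambda\neq 0$ are used). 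Writing a given $V\in\X{\I_{\xo}}$ in ambient coordinates as $V=\sum_j V^j\frac{\partial}{\partial y^j}$ with Euclidean tangency $\sum_j V^j y^j=0$, and projecting, I obtain
\begin{equation}
  \label{eq:plan_V}
  V=\frac{1}{\lambda}\sum_{i,j=1}^n y^i\,V^j\,\xi_{ij},
\end{equation}
an exact representation of $V$ through the $\xi_{ij}$ with smooth coefficients $y^iV^j\in C^\infty(\mathbb S^{n-1})$.

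Next I would approximate these coefficients by polynomials. Using the $C^\infty$-density of polynomial restrictions in $C^\infty(\mathbb S^{n-1})$ --- a classical fact that follows from the smooth (rather than merely uniform) convergence of the spherical-harmonic, i.e.\ Laplace--Beltrami eigenfunction, expansion of a smooth function --- each $V^j$ is approximated in every $C^k$-seminorm by polynomials $P^j$. Since multiplication by the fixed smooth fields $\xi_{ij}$ and by the fixed polynomials $y^i$ is continuous in the $C^\infty$-topology, the polynomial fields $\frac{1}{\lambda}\sum_{i,j} y^i P^j\,\xi_{ij}\in\mathcal A$ converge to $V$ in $\X{\I_{\xo}}$. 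Hence $V\in\overline{\mathcal A}$, so $\overline{\mathcal A}=\X{\I_{\xo}}$, and \emph{a fortiori} $\overline{\ihol_{\xo}(\F)}=\X{\I_{\xo}}$, which is the assertion.

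The main obstacle is the upgrade from pointwise spanning to genuine $C^\infty$-density: approximating the coefficients uniformly is not enough, one needs simultaneous control of all derivatives. This is precisely where one must invoke the smoothly convergent spherical-harmonic expansion in place of the plain Stone--Weierstrass theorem, and where the exact identity \eqref{eq:plan_V}, valid because $\lambda\neq 0$, is essential: it converts the approximation of vector fields into the scalar approximation of the components $V^j$, to which the polynomial density applies termwise.
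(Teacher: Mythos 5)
Your proof is correct, and it follows the paper's overall strategy: by Lemma \ref{lemma:A} it suffices to show that $\mathcal A$, viewed via Remark \ref{rem:A} as the space of polynomial-coefficient combinations of the rotation fields $\xi_{ij}$ on $\I_{\xo}\simeq\mathbb S^{n-1}$, is $C^\infty$-dense in $\X{\I_{\xo}}$. You diverge from the paper in the two ingredients that make this density argument run. First, where the paper simply asserts that every $X\in\X{\mathbb S^{n-1}}$ can be written as $X^{ij}\xi_{ij}$ with smooth coefficients, you actually prove it: your tangential-projection identity $\frac{1}{\lambda}\sum_i y^i\xi_{ij}=\frac{\partial}{\partial y^j}-y^j\sum_i y^i\frac{\partial}{\partial y^i}$ yields the exact representation $V=\frac{1}{\lambda}\sum_{i,j}y^iV^j\xi_{ij}$ for any tangent field $V$ (a correct computation, since the radial terms cancel by $\sum_j V^jy^j=0$), which both makes explicit where $\lambda\neq 0$ enters and reduces the approximation of vector fields to termwise scalar approximation of the components $V^j$. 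Second, for the $C^\infty$-density of polynomial restrictions in $C^\infty(\mathbb S^{n-1})$, the paper invokes Nachbin's theorem (the $C^k$ analogue of Stone--Weierstrass), while you use the smooth convergence of the spherical-harmonic expansion; this is a legitimate and more classical substitute (coefficients of a smooth function decay rapidly in the eigenvalue, partial sums are restrictions of harmonic polynomials, and elliptic estimates upgrade $L^2$ decay to $C^k$ convergence), though it is the one point where you rely on a nontrivial fact cited without proof --- exactly the role Nachbin's theorem plays in the paper. The net effect is that your argument is, if anything, tighter than the paper's, since the spanning step is proved rather than asserted; both proofs then conclude identically that $\overline{\mathcal A}=\X{\I_{\xo}}$ forces $\overline{\ihol_{\xo}(\F)}=\X{\I_{\xo}}$.
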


\begin{proof}
  Let $(M, \F)$ be a projectively flat spherically symmetric Finsler
  manifold of constant curvature $\lambda \neq 0$ and $\xo$ a point where
  \eqref{eq:multiple_norm} is satisfied.  According to Remark \ref{rem:A},
  the elements of $\mathcal A$ can be considered as vector fileds on the
  indicatrix $\I_{\xo} \simeq \mathbb S^{n-1}$ with polynomial
  coefficients that is   
  \begin{equation}
    \label{eq:AS}
    \mathcal A = \halmazvonal{Q^{ij} \xi_{ij}}{Q^{ij}\in
      \pols},
  \end{equation}
  where
  \begin{equation}
    \label{eq:polynomials_S}
    \pols = \R[y^1, \dots, y^n] \big |_{\mathbb S^{n-1}}
    = 
    \left\{ p \, \big |_{_{\mathbb S^{n-1}}} \  \Big | \ p \in
      \mathbb R[y^1, \dots, y^n] \right\} ,
  \end{equation}
  is the algebra of polynomial functions on $\mathbb S^{n-1} \subset \R^n$.  

  Nachbin's theorem \cite{Nachbin_1949, Nachbin_1979} gives an analog for
  Stone--Weierstrass theorem for algebras of real values $C^k$ functions on
  a $C^k$ manifold, $k=1, \dots, \infty$. It states that if $\mathcal S$ is
  a subalgebra of the algebra of $C^k$ smooth functions on a finite
  dimensional $C^k$ smooth manifold $M$, and $\mathcal S$ separates the
  points of $M$ and also separates the tangent vectors of $M$ in the sense
  that for each point $x \in M$ and tangent vector $v \in T_xM$, there is
  an $f \in \mathcal S$ such that $df_x(v) \neq 0$, then $\mathcal S$ is
  dense in $C^k(M)$. Clearly, $\pols$, the algebra of polynomial functions
  on $\mathbb S^{n-1}$ satisfies Nachbin's conditions, therefore $\pols$ is
  dense in $C^\infty(\mathbb S^{n-1})$ with respect to the $C^\infty$
  topology:
  \begin{equation}
    \label{eq:pol_sphere}
    \overline{\pols} = C^\infty(\mathbb S^{n-1}).
  \end{equation}
  On the other hand, any vector field $X\in \X{\mathbb S^{n-1}}$ can be
  written as
  \begin{math}
    \label{eq:vector_on_S}
    X = X^{ij} \xi_{ij},
  \end{math}
  where $X^{ij}$ are smooth function on $\mathbb S^{n-1}$, and $\xi_{ij}$
  are the infinitesimal generator of rotations which are the curvature
  vector fields \eqref{eq:curv_field}, that is
  \begin{equation}
    \label{eq:XS}
    \X{\mathbb S^{n-1}}
    = \halmazvonal{X^{ij} \xi_{ij}}{X^{ij}\in C^\infty(\mathbb S^{n-1})},
  \end{equation}
  and from \eqref{eq:pol_sphere} we get that \eqref{eq:AS} is a dense
  subset in \eqref{eq:XS} with respect to the $C^k$ topology, that is
  \begin{equation}
    \label{eq:P_S}
    \overline{\, \mathcal A \, }= \mathfrak X(\mathbb S^{n-1}).
  \end{equation}
  From Lemma \ref{lemma:A} we get
  \begin{equation}
    \label{eq:AiholX}
    \mathcal A \ \subset \ \ihol_{\xo}(\F) \ \subset \ \X{\mathbb S^{n-1}},
  \end{equation}
  it follows 
  \begin{equation}
    \label{eq:clAiholX}
    \overline{\,  \mathcal A \, }
    \ \subset \ \overline{\ihol_{\xo}(\F)}
    \ \subset \ \X{\mathbb S^{n-1}},
  \end{equation}
  and from \eqref{eq:P_S} one can obtain
  \begin{equation}
    \label{eq:closure_iholS}
    \overline{\, \ihol_{\xo}(\F) \, } = \X{\mathbb S^{n-1}}
  \end{equation}
  that is $\ihol_{\xo}(\F)$ is dense in $\mathfrak X(\mathbb S^{n-1})$ with
  respect to the $C^\infty$ topology.
  
\end{proof}

\bigskip

\begin{theorem}
  \label{thm:2_hol_group_max}
  Let $(M, \F)$ be a simply connected, projectively flat spherically
  symmetric Finsler manifold of constant curvature $\lambda \neq 0$ and
  $\xo$ a point where \eqref{eq:multiple_norm} is satisfied.  Then the
  holonomy group $\Hol_{\xo}(\F)$ is maximal, that is its closure is
  isomorphic to $\diffo{\mathbb S^{n-1}}$, the connected component of the
  identity of the group of smooth diffeomorphism on the ${n-1}$-dimensional
  sphere.
\end{theorem}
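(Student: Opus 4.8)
The plan is to obtain the statement as a direct combination of the two principal results already established, namely Proposition \ref{prop:ihol_dense} and Theorem \ref{thm:1_ihol}. First I would verify that the hypotheses of Proposition \ref{prop:ihol_dense} are met: by assumption $(M,\F)$ is projectively flat, spherically symmetric, of constant curvature $\lambda \neq 0$, and $\xo$ is a point at which \eqref{eq:multiple_norm} holds. That proposition then yields immediately that the infinitesimal holonomy algebra $\ihol_{\xo}(\F)$ is dense in $\X{\I_{\xo}}$ with respect to the $C^\infty$ topology.

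Second, since $M$ is simply connected and $n$-dimensional, the hypotheses of Theorem \ref{thm:1_ihol} are satisfied at the point $x=\xo$: the required denseness of $\ihol_{\xo}(\F)$ in $\X{\I_{\xo}}$ is precisely what the previous step provides. Applying Theorem \ref{thm:1_ihol} I would conclude at once that the holonomy group is maximal, i.e. $\overline{\Hol_{\xo}(\F)} \cong \diffo{\mathbb S^{n-1}}$, which is the desired statement.

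I do not expect a genuine obstacle at this stage, because the analytic substance has already been carried out upstream: the denseness in Proposition \ref{prop:ihol_dense} rests on the inductive computation of Lemma \ref{lemma:A} together with Nachbin's theorem, while the passage from denseness to maximality in Theorem \ref{thm:1_ihol} rests on the continuity of the exponential map, the inclusion \eqref{eq:exp_ihol}, and the simplicity of $\diffo{\I_x}$ (Thurston). The one point I would make explicit for cleanliness is the compatibility of the identification $\I_{\xo}\simeq\mathbb S^{n-1}$ used in both results: under the spherical symmetry condition \eqref{eq:multiple_norm} the indicatrix at $\xo$ is exactly the Euclidean unit sphere by \eqref{eq:I_x0}, so that the diffeomorphism group of the indicatrix coincides with that of $\mathbb S^{n-1}$, and the two cited results indeed speak of the same object.
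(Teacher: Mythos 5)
Your proposal is correct and matches the paper's own proof exactly: both deduce the theorem directly by feeding the denseness conclusion of Proposition \ref{prop:ihol_dense} into the hypothesis of Theorem \ref{thm:1_ihol}. Your added remark on the identification $\I_{\xo}\simeq\mathbb S^{n-1}$ via \eqref{eq:I_x0} is a sensible clarification but not a departure from the paper's argument.
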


\begin{proof}
  The proof is a direct consequence of Theorem \ref{thm:1_ihol} and
  Proposition \ref{prop:ihol_dense}. Indeed, let $(M, \F)$ be a projectively
  flat spherically symmetric Finsler manifold of constant curvature
  $\lambda \neq 0$ and $\xo$ a point where \eqref{eq:multiple_norm} is
  satisfied. From Proposition \ref{prop:ihol_dense} we get that the
  infinitesimal holonomy algebra $ hol_{\xo}^{*}(\F)$ is dense in
  $\X{\I_{\xo}}$, and from Theorem \ref{thm:1_ihol} we get that in that
  case the holonomy group is maxmial
  \begin{equation}
    \label{eq:closure_Hol_2}
    \overline{\Hol_x(\F)} \cong \diffo{\mathbb S^{n-1}},
  \end{equation}
  that is its closure is isomorphic to the connected component of the
  identity in the diffeomorphism group of the $n-1$-dimensional sphere.  

\end{proof}

\bigskip
\bigskip

\begin{example} 
  \label{example:funk} 
  (P. Funk, \cite{Funk_1929,Funk_1963}) The \emph{standard Funk
    manifold} $(\mathbb D^n, \F)$ defined by the metric function
  \begin{equation}
    \label{projective1} \F(x,y) = \frac{\sqrt{|y|^2 - \left(|x|^2|y|^2 - 
          \langle x,y\rangle^2\right)}}{1 - |x|^2}\pm\frac{\langle x,y\rangle}
    {1 - |x|^2} 
  \end{equation}
  on the unit open disk $\mathbb D^n \subset \mathbb R^n$ is projectively
  flat with constant flag curvature $\lambda=-\frac14$. Its projective
  factor can be computed using formula (\ref{eq:P}):
  \begin{equation}
    \label{projective2}
    \P(x,y) =  \frac12 \;\frac{\pm\sqrt{|y|^2 - \left(|x|^2|y|^2  
          - \langle x,y\rangle^2\right)} + \langle x,y\rangle}{1 - |x|^2}.
  \end{equation}
  At $\xo=(0,\dots, 0) \in \mathbb D^n$ we have $\F(\xo,y)=\left|y\right|$
  and $\P(x_0,y)=\pm\frac{1}{2}\left|y\right|$.  Thus, at $\xo$ the
  standard Funk metric satisfies the condition of Theorem
  \ref{thm:2_hol_group_max}, therefore its holonomy group $\Hol_{\xo}(\F)$
  is maximal and isomorphic to $\diffo{\mathbb S^{n-1}}$.
\end{example}

\begin{example} 
  \label{example:bryant_shen} 
  The \emph{Bryant--Shen metric} $\F_\alpha$, ($|\alpha|<\frac{\pi}{2}$),
  are the elements of a $1$-parameter family of projective Finsler metric
  with constant flag curvature $\lambda=1$:
  \begin{equation}
    \label{eq:bryant_shen}
    \F_\alpha=\sqrt{\frac{\sqrt{A}+B}{2D}+
      \left(
        \frac{C}{D}
      \right)^2}+\frac{C}{D},
    \qquad
    \P_\alpha= -\sqrt{\frac{\sqrt{A}-B}{2D}- 
      \left(\frac{C}{D}\right)^2}-\frac{\widetilde{C}}{D},
  \end{equation}
  where 
  \begin{equation}
    \begin{aligned}
      A&=\big(\cos(2\alpha)\left| y \right|^2 + \norm{x}^2 \! \norm{y}^2-
      \langle x,y \rangle^2 \big)^2+ \bigl( \sin(2\alpha)\norm{y}^2
      \bigr)^2,
      \\
      B& =\cos(2\alpha)\left| y \right|^2 + \norm{x}^2 \! \norm{y}^2-
      \langle x,y \rangle^2,
      \\
      C&=\sin(2\alpha)\langle x,y \rangle,
      \\
      \widetilde{C}&= \big(\cos(2\alpha)+\left| x \right|^2 \big) \langle
      x,y \rangle,
      \\
      D&=\left| x \right|^4 +2 \cos(2\alpha)\left| x \right|^2+1.
  \end{aligned}
\end{equation}
The norm function and the projective factor at $0\!\in\!  \mathbb R^n$ have
the form
\begin{equation}\label{BrSh}
  \F_\alpha(0,y) = |y| \,\cos \alpha, \qquad 
  \P_\alpha (0,y) = |y|\,\sin \alpha, \qquad \norm{\alpha}<\frac{\pi}{2},
  \end{equation}
  in a local coordinate system corresponding to the Euclidean canonical
  coordinates, centered at $0\in \mathbb R^n$. R.~Bryant in
  \cite{Bryant_1996, Bryant_1997} introduced and studied this class of
  Finsler metrics on $\mathbb S^2$ where great circles are
  geodesics. Z. Shen generalized its construction and obtained the
  expression \eqref{eq:bryant_shen} in \cite[Example 7.1.]{Shen_2003b}.
  Since the condition of Theorem \ref{thm:2_hol_group_max}, therefore the
  holonomy group $\Hol_{\xo}(\F_{\alpha})$ of the Bryant--Shen metric is
  maximal and isomorphic to $\diffo{\mathbb S^{n-1}}$.
\end{example}

\end{document}